\documentclass[11pt]{amsart}

\usepackage{amssymb,hyperref}
\usepackage[all]{xy}


\newtheorem{theorem}{Theorem}

\newtheorem{conjecture}[theorem]{Conjecture}
\newtheorem{corollary}[theorem]{Corollary}
\newtheorem{definition}[theorem]{Definition}
\newtheorem{example}[theorem]{Example}

\newtheorem{fact}[theorem]{Fact}
\newtheorem{lemma}[theorem]{Lemma}
\newtheorem{problem}[theorem]{Problem}
\newtheorem{proposition}[theorem]{Proposition}
\newtheorem{question}[theorem]{Question}
\newtheorem{remark}[theorem]{Remark}

\newcommand{\bcon}{\begin{conjecture}}
\newcommand{\econ}{\end{conjecture}}
\newcommand{\bcor}{\begin{corollary}}
\newcommand{\ecor}{\end{corollary}}
\newcommand{\bdf}{\begin{definition}}
\newcommand{\edf}{\end{definition}}
\newcommand{\beq}{\begin{equation}}
\newcommand{\eeq}{\end{equation}}
\newcommand{\bexa}{\begin{example}}
\newcommand{\eexa}{\end{example}}
\newcommand{\bfac}{\begin{fact}}
\newcommand{\efac}{\end{fact}}
\newcommand{\blem}{\begin{lemma}}
\newcommand{\elem}{\end{lemma}}
\newcommand{\bprb}{\begin{problem}}
\newcommand{\eprb}{\end{problem}}
\newcommand{\bpro}{\begin{proposition}}
\newcommand{\epro}{\end{proposition}}

\newcommand{\bque}{\begin{question}}
\newcommand{\eque}{\end{question}}
\newcommand{\brem}{\begin{remark}}
\newcommand{\erem}{\end{remark}}
\newcommand{\bthm}{\begin{theorem}}
\newcommand{\ethm}{\end{theorem}}
\newcommand{\bmat}{\begin{matrix}}
\newcommand{\emat}{\end{matrix}}

\newcommand{\bpr}{\begin{proof}}
\newcommand{\epr}{\end{proof}}


\newcommand{\SL}[1]{\operatorname{SL(#1,\C)}}

\newcommand{\GL}[1]{\operatorname{GL(#1,\C)}}
\newcommand{\SO}[1]{\operatorname{SO(#1,\C)}}
\newcommand{\Sp}[1]{\operatorname{Sp(#1,\C)}}

\newcommand{\lb}{\label}
\newcommand{\la}{\langle}
\newcommand{\ra}{\rangle}
\newcommand{\comment}[1]{\,}

\newcommand{\cal}{\mathcal}

\newcommand{\Z}{\mathbb Z}

\newcommand{\C}{\mathbb C}
\newcommand{\tr}{\mathrm{tr}}
\newcommand{\Ch}{\mathcal{C}h}

\newcommand{\X}{\mathcal X}

\setlength{\unitlength}{1in}
\newcommand{\hm}{\mathrm{Hom}}
\newcommand{\aq}{/\!\!/}


\title{Varieties of Characters}

\begin{document}

\thispagestyle{empty}

\begin{abstract}
Let $G$ be a connected reductive affine algebraic group. In this short note we define the {\it variety of $G$-characters} of a finitely generated group $\Gamma$ and show that the quotient of the $G$-character variety of $\Gamma$ by the action of the trace preserving outer automorphisms of $G$ normalizes the variety of $G$-characters when $\Gamma$ is a free group, free abelian group, or a surface group.
\end{abstract}

\author[S. Lawton]{Sean Lawton}

\address{Department of Mathematical Sciences, George Mason University,
4400 University Drive,
Fairfax, Virginia  22030, USA}

\email{slawton3@gmu.edu}

\author{Adam S. Sikora}

\address{244 Math Bldg, University at Buffalo, SUNY, Buffalo, NY 14260}
\email{asikora@buffalo.edu}

\subjclass[2010]{14D20; 14L30; 20C15.}

\keywords{Character variety, trace variety, variety of characters.}

\pagestyle{myheadings}

\maketitle

\section{Introduction}
For a finitely generated group $\Gamma$ and a connected reductive affine algebraic group $G$ over the complex numbers, the {\it $G$-character variety of $\Gamma$} is the Geometric Invariant Theory (GIT) quotient $\X(\Gamma, G):=\hm(\Gamma, G)\aq G$ of the $G$-representation space of $\Gamma$, $\hm(\Gamma, G)$, (considered as an algebraic set) by the adjoint action of $G$. 

This algebraic set is known for its relationship to moduli spaces of geometric structures (\cite{CG2, G6,  GM2,JM, KM,Thurston}), 3 and 4 dimensional topology (\cite{CCGLS, CS, BZ, PrSi, Bu, Cur}),  mathematical physics (\cite{At, AtBo, Borel-Friedman-Morgan, Hi, Kac-Smilga, Wi, JW, KW}) and invariant theory (\cite{BH, La3, Si2}).

The term ``$G$-character variety'' is motivated by the fact that its closed points for $G=\SL{n}, \Sp{n},\SO{2n+1}$ are classified by the $G$-characters of $\Gamma$. (Perhaps the first proof of this was given by Culler-Shalen in \cite{CS} for the case $\SL{2}$.) However, it is not the case for $\SO{2n}$, see \cite[Appendix A]{FlLa2b}, \cite{Si7, Si8}.
For that reason, it is useful to consider the space of characters of $\Gamma$, which we show to have a natural structure of an algebraic set, leading to the notion of the {\it variety of characters}.

In this short note we first make clear the relationship between the variety of characters of $\Gamma$ and the character variety of $\Gamma$ by showing that although the variety of characters is not generally isomorphic to the character variety, for many groups (including free groups, free abelian groups, and surface groups), the quotient of the $G$-character variety by a finite group (the trace-preserving outer automorphisms of $G$) normalizes the variety of characters (see Corollary \ref{cor-normal}). As a consequence, we conclude that regular functions on the character varieties as above are rational functions in characters. 

In the last section we compute the the trace-preserving outer automorphisms of $G$ for various simple complex Lie groups.

\subsection*{Acknowledgements}
We acknowledge support from U.S. National Science Foundation grants: DMS 1107452, 1107263, 1107367 ``RNMS: GEometric structures And Representation varieties" (the GEAR Network) and Grant No. 0932078000 while we were in residence at the Mathematical Sciences Research Institute in Berkeley, California, during the Spring 2015 semester. Lawton was also partially supported by the grant from the Simons Foundation (Collaboration \#245642) and the U.S. National Science Foundation (DMS \#1309376). Lastly, we thank the referee for helping improve the paper.

\section{Variety of Characters}\lb{varchar}

Let $\Gamma$ be a finitely generated group. Fix a connected reductive subgroup $G$ of $\SL{n}$.  A {\it $G$-character of $\Gamma$}, denoted $\chi_\rho$, is the trace of a $G$-representation of $\Gamma$: $$\xymatrix{\Gamma\ar[r]^{\rho} & G\ \ar@{^{(}->}[r]&\SL{n}\ar[r]^{\ \ \ \tr}& \C\\}.$$ Denote the set of $G$-characters of $\Gamma$ by ${\mathcal Ch}(\Gamma,G).$

Let $\hm(\Gamma,G)$ and $\mathcal X(\Gamma,G)=\hm(\Gamma,G)\aq G$ be the representation variety and the $G$-character variety, respectively (considered as affine varieties). Let ${\cal T}(\Gamma,G)$ be the trace $G$-algebra of $\Gamma,$ that is, the subalgebra of  $\C[\X(\Gamma,G)]$ generated by trace functions $\tau_\gamma$ for $\gamma\in \Gamma$ defined by $\tau_\gamma(\rho)=\tr(\rho(\gamma))$, see \cite{CS,FlLa2b,Si5} for more details.

It was proven in \cite[Appendix A]{FlLa2b} and in \cite[Thm. 5]{Si5} that ${\cal T}(\Gamma,G)$ coincides with $\C[\X(\Gamma,G)]$ for any $\Gamma$ and $G$ equal to $\SL{m},$ $\Sp{2m},$ or $\SO{2m+1}.$ For the reader's convenience, we include a concise proof of that result here. 

\begin{proposition}\label{xt-prop}
For $G$ equal to $\SL{m}, \Sp{2m},$ or $\SO{2m+1}$, and for all finitely generated $\Gamma$, ${\cal T}(\Gamma,G)=\C[\X(\Gamma,G)].$
\end{proposition}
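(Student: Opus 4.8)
The plan is to reduce to the case of a free group and then invoke the first fundamental theorem of classical invariant theory. So first I would pick a surjection $\pi\colon F_r\twoheadrightarrow\Gamma$ from a free group of finite rank $r$. It induces a closed embedding $\hm(\Gamma,G)\hookrightarrow\hm(F_r,G)=G^r$ that is equivariant for the conjugation action of $G$, hence a surjective equivariant restriction homomorphism $\C[G^r]\twoheadrightarrow\C[\hm(\Gamma,G)]$. Since $G$ is reductive, taking $G$-invariants preserves surjectivity, so $\C[\X(F_r,G)]\twoheadrightarrow\C[\X(\Gamma,G)]$ is still onto, and under this map $\tau_\gamma\mapsto\tau_{\pi(\gamma)}$. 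Hence it suffices to prove the proposition for $\Gamma=F_r$, i.e.\ that $\C[G^r]^G$ is generated as a $\C$-algebra by the functions $(A_1,\dots,A_r)\mapsto\tr\big(w(A_1,\dots,A_r)\big)$, where $w$ ranges over words in the $A_i^{\pm1}$.

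For the free group, realize $G$ inside $\SL n$ through its defining representation ($n=m,2m,2m+1$ in the three cases); then $G^r$ is a $G$-stable closed subvariety of $M_n(\C)^r$, and reductivity again gives a surjection $\C[M_n(\C)^r]^G\twoheadrightarrow\C[G^r]^G$, so it is enough to generate $\C[M_n(\C)^r]^G$ by traces of words. The point is that here $G$ may be replaced by the ambient classical group: the scalar matrices act trivially by conjugation and together with $\SL n$ generate $\GL n$, so $\C[M_n(\C)^r]^{\SL n}=\C[M_n(\C)^r]^{\GL n}$; likewise $\Or{2m+1}=\SO{2m+1}\times\{\pm I\}$ with $-I$ central, so $\C[M_n(\C)^r]^{\SO{2m+1}}=\C[M_n(\C)^r]^{\Or{2m+1}}$; and for $\Sp{2m}$ nothing need be changed. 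Now the first fundamental theorem for matrix invariants applies: $\C[M_n(\C)^r]^{\GL n}$ is generated by the traces $\tr(A_{i_1}\cdots A_{i_k})$ (Procesi, Razmyslov), while $\C[M_n(\C)^r]^{\Or n}$ and $\C[M_n(\C)^r]^{\Sp n}$ are generated by the traces $\tr\big(A_{i_1}^{\ve_1}\cdots A_{i_k}^{\ve_k}\big)$ with $\ve_j\in\{1,\ast\}$, where $A^{\ast}=B^{-1}A^{\mathsf T}B$ is the adjoint with respect to the invariant form $B$. When a generator of this type is restricted to $G^r$, the identity $A^{\mathsf T}BA=B$, i.e.\ $A^{\ast}=A^{-1}$ for $A\in G$, turns it into $\tau_w$ for a word $w$ in the free generators and their inverses. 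Composing with the surjection of the first step yields $\C[\X(\Gamma,G)]={\cal T}(\Gamma,G)$.

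I expect the routine part to be the reductive-quotient bookkeeping, and the substantive inputs to be the first fundamental theorems cited above. The step to watch — and the one that pins down exactly which $G$ the proposition is about — is the replacement of $G$ by the full general linear or orthogonal group: for $\SO{2m}$, for instance, $\SO{2m}$ sits as an index-two subgroup of $\Or{2m}$ whose nontrivial coset does not act trivially by conjugation, and the extra $\SO{2m}$-invariants (of Pfaffian type) are genuinely not polynomials in traces, which is the failure pointed to in the introduction and the reason the statement is confined to $\SL m$, $\Sp{2m}$, and $\SO{2m+1}$.
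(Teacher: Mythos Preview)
Your proof is correct and follows essentially the same route as the paper's: reduce to the free group via a presentation, pass from $G^r$ to $M_n(\C)^r$ using reductivity (Reynolds operator), and invoke Procesi's first fundamental theorem. You are in fact a bit more careful than the paper at the final step, spelling out the passage from $G$-invariants to the ambient $\GL n$-, $\Or{2m+1}$-, or $\Sp{2m}$-invariants and the identification $A^{\ast}=A^{-1}$ on $G$, whereas the paper simply cites \cite{P1} for the conclusion that $\C[M_n(\C)^r]^G$ is generated by traces of monomials in the matrices and their inverses.
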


\begin{proof}
If $\Gamma$ is a group on $r$ generators, then the epimorphism $F_r\to \Gamma$ mapping the free generators of $F_r$ to the generators of $\Gamma$ yields an embedding $\X(\Gamma,G)\to \X(F_r,G)$ and the corresponding dual epimorphism $\C[\X(F_r,G)]\to \C[\X(\Gamma,G)]$. Since that map sends trace functions to trace functions, it is enough to show that $\C[\X(F_r,G)]=\C[G^r]^G$ is generated by trace functions.

By the assumptions of the theorem, $G$ is an algebraic subgroup of $\SL{n}$ for some $n$ and, hence, of the space of the $n\times n$ complex matrices $Mat(n,\C).$ The group $G$ acts on it by conjugation and the embedding $G\subset \SL{n}\subset Mat(n,\C),$ induces a $G$-equivariant epimorphism $\C[Mat(n,\C)^r]\to \C[G^r]$, which restricts to an epimorphism $\C[Mat(n,\C)^r]^G\to \C[G^r]^G$ by the existence of Reynolds operators. Now the statement follows from the results of  \cite{P1} stating that $\C[Mat(n,\C)^r]^G$ is generated by traces of monomials in matrices and their inverses for $G=\SL{m}, \SO{2m+1},$ and $\Sp{2m}$.
\end{proof}

The above statement does not hold for $G=\SO{2m},$ $m\geq 1.$ In that case $\C[\X(\Gamma,G)]$ is a $\cal T(\Gamma,G)$-algebra finitely generated by expressions involving Pfaffians, \cite{ATZ-SO}, \cite[Proposition 16]{Si7}.  

In this paper we intend to elaborate on the relationship between coordinate rings of character varieties and their trace algebras.

Denote by $\varphi$ the natural projection map $$\varphi: \hm(\Gamma,G)\aq{\cal N}(G)\to \mathcal X(\Gamma,\SL{n}),$$
where ${\cal N}(G)$ is the normalizer of $G$ in $\SL{n}.$ Since a normalizer of a connected reductive group is reductive, the above GIT quotient is well defined. By \cite[Corollary 2 to Theorem 1]{Vi}, $\varphi$ is a finite morphism and, hence, its image is closed.

\blem\lb{isolemma}
There is an isomorphism
$$\psi: {\cal T}(\Gamma,G)\to \C[\varphi(\hm(\Gamma,G)\aq\mathcal N(G))]$$
sending regular functions $\tau_\gamma$ to regular functions $\tilde{\tau}_\gamma$ on $\varphi(\hm(\Gamma,G)\aq\mathcal N(G))$ defined by $\tilde{\tau}_\gamma([\rho])= \tr(\rho(\gamma)).$ Since $\tau_\gamma$ generate ${\mathcal T}(\Gamma,G)$, this condition defines $\psi$ uniquely.
\elem

\bpr  Since any polynomial identity in $\tau_\gamma$'s on $\mathcal X(\Gamma,G)$
holds if and only if it holds on $\varphi(\hm(\Gamma,G)\aq{\cal N}(G))$ as well, the above homomorphism is well defined and injective. Since $\C[\mathcal X(\Gamma,\SL{n})]$ is generated by trace functions, $\C[\varphi(\hm(\Gamma,G)\aq \mathcal N(G))]$ is generated by trace functions as well, and hence, $\psi$ is onto.
\epr

We have a map $$\Psi: {\mathcal Ch}(\Gamma,G)\to Spec\, {\cal T}(\Gamma,G)$$ sending $\chi_\rho$ to a algebra homomorphism ${\cal T}(\Gamma,G)\to \C,$ determined by $\tau_\gamma\mapsto \chi_\rho(\gamma).$

It is not hard to see that $\Psi$ is well defined. Furthermore, we have:

\bpro \label{Psi-bij}
$\Psi$ is a bijection.
\epro

\begin{proof}
If $\Psi(\chi_\rho)=\Psi(\chi_{\rho'})$ then $\chi_\rho(\gamma)=\chi_{\rho'}(\gamma)$ for all $\gamma\in \Gamma$, implying $\chi_\rho=\chi_{\rho'}.$  Thus, $\Psi$ is injective.

By Lemma \ref{isolemma}, it remains to prove that
$$\Psi: {\mathcal Ch}(\Gamma,G)\to \varphi(\hm(\Gamma,G)\aq\mathcal N(G))$$
is surjective.  Indeed, for every $\rho\in \hm(\Gamma,G)$, $\Psi(\chi_\rho)=\varphi(\rho)$ and, hence, $\Psi$ is onto.
\end{proof}

The above proposition provides for the structure of an algebraic set on ${\mathcal Ch}(\Gamma,G)$ with its coordinate ring being ${\cal T}(\Gamma,G).$  By analogy with the $G$-character variety of $\Gamma$, we will call ${\mathcal Ch}(\Gamma,G)$ the {\it variety of $G$-characters of $\Gamma$} or simply the {\it variety of characters} when the context is clear.

By Lemma \ref{isolemma}, $\varphi(\hm(\Gamma,G)\aq\mathcal N(G))=\Ch(\Gamma,G)$ and, hence, $\varphi$ can be written as $$\varphi: \hm(\Gamma,G)\aq\mathcal N(G)\to \Ch(\Gamma,G).$$

Following \cite{AB}, denote the set of {\it trace-preserving automorphisms of $G$} by $Aut_T(G)$, that is, the automorphisms $\alpha\in Aut(G)$ such that for all $g\in G$, $\tr(\alpha(g))=\tr(g)$.  Note that $Aut_T(G)$  acts naturally on $\hm(\Gamma,G)$ by $(\alpha,\rho)\mapsto \alpha\circ \rho$ and that this action descends to an action of $Aut_T(G)/Inn(G)$ on $\X(\Gamma, G)$, where $Inn(G)$ is the inner automorphism group of $G$. We have a natural map $$\pi: {\mathcal N(G)}\to Aut_T(G)$$ given by $h\mapsto C_h$ where $C_h(g)=hgh^{-1}$.  Since $Ker(\pi)$ contains the center of $\SL{n}$, $\pi$ is not one-to-one.

Recall that $\rho: \Gamma\to G$ is irreducible if its image is not properly contained in any parabolic subgroup of $G$, see for example \cite{Si4}.

\blem\lb{finite}
$\pi$ is onto.
\elem

\begin{proof}
Let $G_c$ be a maximal compact subgroup of $G$. By \cite[Lemma 1.8]{Gelander}, there exists a representation $\rho: F_2\to G_c$ with a dense image. Then the image of $\rho$ in $G$ is Zariski dense and, in particular, $\rho: F_2\to G_c\hookrightarrow G$ is irreducible. Let $\alpha\in Aut_T(G)$. Since $\C[\X(F_2,\SL{n})]=\cal T(F_2,\SL{n})$ and $\rho$ and $\alpha\rho$ have the same character, they coincide in $\mathcal X(F_2,\SL{n}).$ Since $G$ is a reductive subgroup of $\SL{n}$, $\rho$ and $\alpha\rho$ are completely reducible representations in $\SL{n}$ and, therefore, their $\SL{n}$-conjugation orbits are closed in $\hm(F_2,\SL{n}),$ see  \cite[Theorem 1.27]{L-M}. Since there is a unique closed orbit in each equivalence class in $\mathcal X(F_2,\SL{n})$, see for example \cite[Corollary 6.1]{Do}, $\rho$ and $\alpha\rho$ are conjugate in $\SL{n}$.  Thus, since $\rho$ has Zariski dense image in $G$, this implies that $\alpha: G\to G$ coincides with conjugation of $G$ by some element of $\SL{n}$.
\end{proof} 

We denote $$Out_T(G) := Aut_T(G)/Inn(G),$$ and call this group the {\it trace-preserving} outer automorphisms.  

\blem
$Out_T(G)$ is finite.
\elem

\bpr
$Out_T(G)$ is a subgroup of $Out(G)$ and, consequently, of $Out(\mathfrak g)$ which coincides with the automorphism group of the Dynkin diagram of $\mathfrak g.$ Consequently, $Out_T(G)$ is finite for semisimple $G.$ 

For the general not semisimple case, note that $Out_T(G)$ is the epimorphic image of $\mathcal N(G)/G$ which is finite by \cite[Corollary 3b]{Vi}. 
\epr

Clearly $Out_T(G)$ descends to an action on $\X(\Gamma, G)$ yielding an isomorphism $$\hm(\Gamma,G)\aq\mathcal N(G)\to \X(\Gamma, G)/Out_T(G).$$
Consequently, $\varphi$ becomes a map
\begin{equation}\label{varphi}
\varphi:\X(\Gamma, G)/Out_T(G) \to \Ch(\Gamma,G).
\end{equation}

By \cite[Corollary 2 of Theorem 1]{Vi}, the map $\X(\Gamma, G) \to \Ch(\Gamma,G)$ is finite. Consequently, $\varphi$ is finite too.

Denote the set of $G$-representations of $\Gamma$ having Zariski dense image by $\hm^{zd}(\Gamma,G),$ and let $\X^{zd}(\Gamma, G)$ be the corresponding subspace in $\X(\Gamma, G)$. Let $\Ch^{zd}(\Gamma, G)\subset \Ch(\Gamma, G)$ be the subset of characters of Zariski dense representations of $\Gamma$, and let $\varphi^{zd}$ be the restriction of \eqref{varphi} to $\X^{zd}(\Gamma,G)/Out_T(G)\subset \X(\Gamma,G)/Out_T(G).$

The example following \cite[Proposition 8.2]{AB} shows that if $G$ is reductive, then $\hm^{zd}(\Gamma,G)$ may not be Zariski open. However, the following lemma is true.

\begin{lemma}\label{dense-rem} Let $G$ be connected and reductive.  Consider the subsets $\mathcal{S}$: $\hm^{zd}(\Gamma,G)\subset \hm(\Gamma,G),$ $\Ch^{zd}(\Gamma, G)\subset \Ch(\Gamma,G)$, $\X^{zd}(\Gamma, G)\subset \X(\Gamma, G)$, and $\X^{zd}(\Gamma,G)/Out_T(G)\subset \X(\Gamma,G)/Out_T(G).$
\begin{enumerate}
\item[(a)] If $G$ is semisimple, then the subsets in $\mathcal{S}$ are Zariski open. 
\item[(b)] $\hm^{zd}(\Gamma,G)\not=\emptyset$, whenever $\Gamma$ maps onto a free group of rank at least 2. 
\item[(c)] If $\hm^{zd}(\Gamma,G)\not=\emptyset$, then the subsets $\mathcal{S}$ are dense $($and hence Zariski dense$)$ whenever the corresponding superset in $\mathcal{S}$ is irreducible.\end{enumerate}
\end{lemma}

\begin{proof}
(a) When $G$ is semisimple $\hm^{zd}(\Gamma,G)$ is Zariski open by \cite[Proposition 8.2]{AB}. 
Let $\pi_G:\hm(\Gamma, G)\to \X(\Gamma, G)$ be the GIT quotient map.  Because $ \X(\Gamma, G)$ has the quotient topology from $\pi_G:\hm(\Gamma, G)\to \X(\Gamma, G)$ for the openness of $\X^{zd}(\Gamma, G)$ is enough to show that
\begin{equation}\label{e-pi}
\pi_G^{-1}(\X^{zd}(\Gamma, G))=\hm^{zd}(\Gamma, G).
\end{equation}
The inclusion $\supset$ is obvious. To show $\subset$, note that every Zariski dense $\rho$ is irreducible and, hence, a stable point of the action of $G$ by conjugation, by \cite[Corollary 31]{Si4}. By \cite{Do} (Theorem 8.1 and the Property (iv) of a good categorical quotient), $\pi_G^{-1}([\rho])$ is the (set-theoretic) $G$-orbit of $\rho$. Now the inclusion $\subset$ of (\ref{e-pi}) follows from the fact that all representations in that orbit (that is, conjugates of $\rho$) are Zariski dense.

The proof of openness of $\X^{zd}(\Gamma,G)/Out_T(G)$ is very similar to the above one, but simpler. Again, considering the quotient map 
$$\pi_{Out_T(G)}: \X(\Gamma,G)\to \X(\Gamma,G)/Out_T(G),$$ 
it is enough to show that 
$$\pi_{Out_T(G)}^{-1}(\X^{zd}(\Gamma,G)/Out_T(G))=\X^{zd}(\Gamma,G),$$
but since $Out_T(G)$ is finite the statement follows from the fact that this categorical quotient coincides with the set-theoretic quotient.

We now show $\Ch^{zd}(\Gamma, G)$ is open. Let $Y, Y^{zd}$ and $Y^{nzd}$ respectively denote $\X(\Gamma,G)/Out_T(G)$, $\X^{zd} (\Gamma,G)/Out_T(G)$ and the complement $$\X^{nzd}(\Gamma,G)/Out_T(G):=\X(\Gamma,G)/Out_T(G)-\X^{zd} (\Gamma,G)/Out_T(G).$$ Since $\varphi$ is onto,
$$\Ch(\Gamma,G)=\varphi(Y)=\varphi(Y^{zd})\cup\varphi(Y^{nzd}).$$
We have shown that $Y^{nzd}$ is closed and, since $\varphi$ is finite, $\varphi(Y^{nzd})$ is closed in $\Ch(\Gamma,G).$ Now the openness of $\Ch^{zd}(\Gamma,G)=\varphi(Y^{zd})$ follows from
\cite[Proposition 8.1]{AB} which implies that $\varphi(Y^{zd})$ and $\varphi(Y^{nzd})$ are disjoint.

(b) When $\Gamma$ maps onto a free group of rank at least 2, then by \cite[Lemma 1.8]{Gelander} $\hm^{zd}(\Gamma,G)$ is non-empty.  

(c)   Since $G$ is connected and reductive, $G\cong DG\times_F T$ where $DG=[G,G]$ is semisimple, $T$ is a central algebraic torus, and $F=T\cap DG$ is a finite central subgroup.  Since $\hm(\Gamma,G)$ is connected,
\begin{eqnarray*}
\hm(\Gamma,G)&\cong&\hm^0(\Gamma,DG \times T)/\hm(\Gamma,F)\\
             &\cong&(\hm^0(\Gamma,DG) \times \hm^0(\Gamma,T))/\hm'(\Gamma,F), 
\end{eqnarray*}
by \cite[Prop. 5(1)]{Si7}, where the superscript $0$ denotes the connected component of the trivial representation and $\hm'(\Gamma,F)$ denotes the subgroup of $\hm(\Gamma,F)$ mapping $\hm^0(\Gamma,DG \times T)$ to itself. A subgroup of $G\cong DG\times_F T$ is Zariski dense, if and only if it is of the form $D' \times_F T'$ for Zariski dense $D'\subset DG$, $T'\subset T.$ Consequently, $$\hm^{zd}(\Gamma,G)\cong(\hm^{0,zd}(\Gamma,DG) \times \hm^{0,zd}(\Gamma,T))/\hm'(\Gamma,F).$$ Since $\hm^{zd}(\Gamma,G)\ne \emptyset,$ $\hm^{0,zd}(\Gamma,DG)$ is non-empty and also Zariski open in $\hm^{0}(\Gamma,DG)$ by Part (a). The irreducibility of $\hm(\Gamma,G)$ implies the irreducibility of $\hm^{0}(\Gamma,DG)$ and, therefore, $\hm^{0,zd}(\Gamma,DG)$ is dense in $\hm^{0}(\Gamma,DG)$. Note that 
$$\hm^0(\Gamma, T)\cong \hm^0(\Gamma/[\Gamma,\Gamma],T)\cong T^r,$$ 
where $r$ is the rank of the free part of the abelianization $\Gamma/[\Gamma,\Gamma]$.
Since the set of elements of $T^r$ generating Zariski dense subgroups in $T^r$ is Zariski dense,
$\hm^{0,zd}(\Gamma,T)$ is Zariski dense in $\hm^{0}(\Gamma,T).$ Thus, $\hm^{zd}(\Gamma,G)$ is Zariski dense in $\hm(\Gamma,G).$ 

Similarly, $\X^{zd}(\Gamma,G)$ is dense in its superset, by the same argument coupled with \cite[Prop. 5(2)]{Si7}.
That implies $\X^{zd}(\Gamma,G)/Out_T(G)$ is dense as well. Finally, since $\varphi$ is onto, it maps the dense set $\X^{zd}(\Gamma,G)/Out_T(G)$ onto a dense set $\Ch^{zd}(\Gamma,G).$
\end{proof}

\begin{theorem}\label{irreps-thm}Let $G\subset \SL{n}$ be a semisimple group, and $\Gamma$ a finitely generated group.  Then: 
\begin{enumerate}
\item $Out_T(G)$ acts freely on $\mathcal X^{zd}(\Gamma,G)$.
\item $\varphi^{zd}:\mathcal X^{zd}(\Gamma,G)/Out_T(G)\to \Ch^{zd}(\Gamma,G)$ is a finite, birational bijection.
\item If $\mathcal X^{zd}(\Gamma,G)/Out_T(G)$ is normal, then $\varphi^{zd}$ is a normalization map.
\end{enumerate}
\end{theorem}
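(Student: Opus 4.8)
The plan is to establish the three parts in order, using the finiteness of $\varphi$ (already noted via Vinberg) as the backbone and extracting injectivity plus generic set-theoretic triviality of the $Out_T(G)$-action on irreducibles from Lemma \ref{finite} and its proof.

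\textbf{Step 1 (free action).} First I would show that if $\alpha \in Aut_T(G)$ fixes the class $[\rho] \in \mathcal X^i(\Gamma,G)$ of an irreducible representation, then $\alpha \in Inn(G)$. Since $\rho$ is irreducible and $\alpha\circ\rho$ has the same $G$-character as $\rho$, the argument of Lemma \ref{finite} (complete reducibility in $\SL n$, uniqueness of the closed orbit in a fiber of $\mathcal X(\Gamma,\SL n)$, and $\mathcal T(\Gamma,\SL n)=\mathbb C[\mathcal X(\Gamma,\SL n)]$) shows $\rho$ and $\alpha\circ\rho$ are conjugate by some $h\in\SL n$; irreducibility of $\rho$ then forces $h\in\mathcal N(G)$, so $\alpha$ agrees with $C_h$, i.e. $\alpha$ is inner modulo the kernel of $\pi$, hence trivial in $Out_T(G)$. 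This gives freeness of the $Out_T(G)$-action on $\mathcal X^i(\Gamma,G)$.

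\textbf{Step 2 (finite birational bijection).} Finiteness of $\varphi^i$ is inherited from finiteness of $\varphi$ (restriction of a finite morphism to a preimage of an open set is finite). For the bijection: surjectivity is immediate since every character of an irreducible representation is $\tilde\tau$-realized, so is in the image (as in Proposition \ref{Psi-bij}); injectivity follows because two irreducible representations with the same $G$-character are $\SL n$-conjugate by the Lemma \ref{finite} argument, and the conjugating element normalizes $G$ by irreducibility, so the two points of $\mathcal X(\Gamma,G)$ differ by an element of $\mathcal N(G)/G$, i.e. become equal in the $Out_T(G)$-quotient. Birationality then follows from finiteness plus bijectivity on the irreducible locus together with the fact that $\varphi^i$ induces an isomorphism of function fields: a finite bijective morphism between varieties over $\mathbb C$ that is generically one-to-one is birational, and here it is everywhere one-to-one.

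\textbf{Step 3 (normalization).} If the source $\mathcal X^i(\Gamma,G)/Out_T(G)$ is normal, then $\varphi^i$ is a finite birational morphism from a normal variety onto $\Ch^i(\Gamma,G)$, which is by definition (the universal property of normalization, or Zariski's main theorem together with integral closure) a normalization map of $\Ch^i(\Gamma,G)$. I would phrase this via: a finite birational morphism $Y\to X$ with $Y$ normal factors the normalization $\tilde X\to X$, and since $Y\to X$ is already finite and birational with $Y$ normal, $Y\cong \tilde X$ over $X$.

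\textbf{Main obstacle.} The delicate point is Step 1/injectivity in Step 2: passing from ``$\rho$ and $\alpha\circ\rho$ are $\SL n$-conjugate'' to ``the conjugating element lies in $\mathcal N(G)$.'' This uses that $\rho$ is irreducible \emph{as a $G$-representation} and that its image is Zariski dense enough in $G$ to pin down the conjugation; one must be careful that $\alpha(G)=G$ (true since $\alpha\in Aut(G)$) and that $h G h^{-1} = \alpha(G) = G$, which is where irreducibility (image not in a proper parabolic) is actually invoked to guarantee $h$ conjugates $G$ to $G$ rather than merely the image of $\rho$ to the image of $\alpha\circ\rho$. I expect the rest to be routine bookkeeping with the already-cited finiteness and GIT facts.
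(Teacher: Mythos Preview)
Your Step 1 contains a genuine gap, and it stems from discarding information that the paper's proof uses directly. The hypothesis is that $[\alpha]\cdot[\rho]=[\rho]$ \emph{in $\mathcal X(\Gamma,G)$}, i.e.\ in the $G$-GIT quotient. You immediately weaken this to ``$\alpha\circ\rho$ and $\rho$ have the same character'' and then run the Lemma~\ref{finite} argument inside $\SL n$. That gives you only $h\in\SL n$, and your claim that irreducibility forces $h\in\mathcal N(G)$ is not justified: irreducibility (image not in a proper parabolic of $G$) does not imply the image is Zariski dense in $G$, so $h$ conjugating $\mathrm{im}(\rho)$ into $G$ does not yield $hGh^{-1}=G$. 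Worse, even granting $h\in\mathcal N(G)$, your conclusion ``$\alpha$ is inner modulo $\ker\pi$, hence trivial in $Out_T(G)$'' is a non sequitur: $\alpha=C_h$ with $h\in\mathcal N(G)$ only says $\alpha\in\mathrm{image}(\pi)=Aut_T(G)$, which you already knew. To get $[\alpha]$ trivial in $Out_T(G)=Aut_T(G)/Inn(G)$ you need $h\in G$.

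The paper avoids all of this by using the hypothesis as stated: since $[\alpha\circ\rho]=[\rho]$ in $\mathcal X(\Gamma,G)$ and irreducible $G$-representations have closed $G$-orbits, $\alpha\circ\rho$ and $\rho$ are genuinely $G$-conjugate, so $h\in G$ from the start; then $C_h\in Inn(G)$ and the extension from $\mathrm{im}(\rho)$ to $G$ (via irreducibility) finishes. Your injectivity argument in Step~2 inherits the same defect; the paper instead cites \cite[Proposition~8.1]{AB} for injectivity and \cite[Lemma~1]{Vi} for birationality. Your Step~3 matches the paper. The ``main obstacle'' you flag (getting $h\in\mathcal N(G)$) is not the real one; the real one is getting $h\in G$, and the fix is not to detour through $\SL n$ at all.
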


\bpr
(1) Suppose for $[\alpha]\in Out_T(G)$ that $[\alpha]\cdot [\rho]=[\rho]$ for some $[\rho]\in \X^{zd}(\Gamma, G)$. By the argument of the proof of Lemma \ref{dense-rem}(a), $\rho$ and $\alpha\rho$ being equivalent and Zariski dense, are conjugate.  In other words,
$\alpha(\rho(\gamma))=h\rho(\gamma)h^{-1}$ for some $h\in G$ and all $\gamma\in \Gamma.$ Since $\rho$ has Zariski dense image, $\alpha(g)=hgh^{-1}$ for all $g\in G$.  In particular, $\alpha\in Inn(G)$ and $[\alpha]$ is the identity in $Out_T(G)$.

(2) Since $\varphi^{zd}$ is a restriction of a finite map to a Zariski open set, it is also finite. It is injective by \cite[Proposition 8.1]{AB}, and $\varphi^{zd}$ is onto by definition. Then, by \cite[Lemma 1]{Vi}, $\varphi^{zd}$ is birational.  

Lastly, $(3)$ is an immediate consequence of $(2)$.
\epr

\begin{example}
Let $D$ be the diagonal matrices in $G=\SO{2n}$. Then $D$ forms an irreducible subgroup by \cite[Example 21]{Si4}. Now take any $A\in\mathrm{O}(2n)$ with $\det(A) = -1$.  Then the conjugation by $A$ is a non-trivial outer automorphism of
$\SO{2n}$ which preserves $D$. Hence, $Out_T(G)$ does not act freely on irreducible representations.  Thus, we cannot extend the previous theorem to the irreducible locus.
\end{example}

It is a general fact that bijective morphisms $f:X \to Y$ give equality $[X]=[Y]$ in the Grothendieck ring $K_0(Var/\C)$, for example see \cite[Page 115]{BBS}.  Consequently,
Theorem \ref{irreps-thm}(2) implies the following corollary.

\bcor
Under the assumptions of Theorem \ref{irreps-thm},
$[\X^{zd}(\Gamma, G)/Out_T(G)]=[\mathcal{C}h^{zd}(\Gamma, G)]$ in the Grothendieck ring $K_0(Var/\C).$
\ecor

\bthm \lb{cor-normal} Let $G\subset \SL{n}$ be a connected reductive group, and $\Gamma$ a finitely generated group.  Then: 
\begin{enumerate}
\item If $\X(\Gamma, G)/Out_T(G)$ is irreducible and contains a representation with Zariski dense image, then $\varphi$ is a birational morphism.
\item If $\X(\Gamma, G)/Out_T(G)$ is normal and contains a representation with Zariski dense image, then $\varphi$ is a normalization map.
\item $\varphi$ is a normalization map for $\Gamma$ a free group of rank $r\geq 2$ $($compare \cite{Vi}$)$.
\item $\varphi$ is a normalization map for a genus $g\geq 2$ surface group $\Gamma_g$ and $G=\SL{m}$ or $\GL{m}$, $m\geq 1$. 
\item $\varphi$ is a normalization map for $\Gamma$ a free abelian group of rank $r\geq 1$ and $G=\SL{m},$ or $\Sp{2m}$, $m\geq 1$.
\end{enumerate}
\ethm

\begin{proof}  
Since we are assuming the set of representations with Zariski dense image is non-empty and $\X(\Gamma, G)/Out_T(G)$ is irreducible as an algebraic set, $\X^{zd}(\Gamma, G)/Out_T(G)$ is dense in $\X(\Gamma, G)/Out_T(G)$ by Lemma \ref{dense-rem}(c). Now $(1)$ follows from \cite[Proposition 8.1]{AB} and \cite[Lemma 1]{Vi}.  

Then $(2)$ is an immediate consequence of $(1)$ and the finiteness of $\varphi.$

Since $\hm(F_r,G)$ is smooth, and $\X(F_r,G)$ and $\X(F_r, G)/Out_T(G)$ are subsequent GIT quotients, all three spaces are irreducible and normal. Since there is a $G$-representation of $F_r$ for any $r\geq 2$ with Zariski dense image by Lemma \ref{dense-rem}(b), $(3)$ follows from $(2)$.

By \cite{Simpson1, Simpson2},  $\X(\Gamma_g, G)$ is normal for surface groups $\Gamma_g$ for $g\geq 2$ when $G$ is $\SL{n}$ or $\GL{n}$. The epimorphism $\Gamma_g\to F_g$ together with Lemma \ref{dense-rem} yields a Zariski dense representation of $\Gamma_g\to G$ for every $g\geq 2$.  Now $(4)$ follows from $(2)$.

Lastly, by \cite{Si6} $\X(\mathbb{Z}^r, G)$ is irreducible and normal for $G$ equal to $\SL{m},$ or $\Sp{2m}$.  In these cases $Out_T(G)$ is trivial (see Section \ref{outt}), and by Proposition \ref{xt-prop} $\X(\mathbb{Z}^r, G)=\Ch(\mathbb{Z}^r, G)$.  Hence, $(5)$ follows.
\end{proof}

In particular, for $\Gamma$ and $G$ from $(1)$ in Corollary \ref{cor-normal}, regular functions on $\mathcal X(\Gamma,G)/ Out_T(G)$ can be expressed as ratios of polynomials in trace functions.

As observed above, the map $\varphi: \X(F_r, G)/Out_T(G) \to \Ch(F_r,G)$ is injective generically. However, it is not always injective on $\X(F_r, G)/Out_T(G)$, see examples in \cite{Vi}. 

On the other hand, Proposition \ref{xt-prop} shows $\varphi$ is an isomorphism for $\Gamma=F_r$ and the minimal dimensional algebraic representations of $G$ equal to $\SL{m},$ $\Sp{2m},$ or $\SO{2m+1}$. It is also an isomorphism for $G=\SO{2m}$ by \cite[Proposition 16]{Si7}.

\begin{question}  
Is $$\varphi: \X(F_r, G)/ Out_T(G)\to \Ch(F_r, G)$$ an isomorphism for any quasi-simple $G$ and any minimal dimensional algebraic representation of $G$?
\end{question}

The above question is open for groups $G$ other than those mentioned above. In particular, it is open for exceptional groups. (The example of $G=\SL{2}\times \SO{3}$ of \cite{Vi} shows the assumption of $G$ being quasi-simple is essential.)

Let $\X^{sm}(\Gamma, G)$ and $\Ch^{sm}(\Gamma, G)$ denote the smooth loci of $\X(\Gamma, G)$ and $\Ch(\Gamma, G)$, respectively.

\bcor 
Let $F_r$ be a free group of rank $r \geq 2$. If $G$ is a semisimple subgroup of $\SL{n}$, then $\X^{zd}(F_r, G)$ is \'etale equivalent to $\Ch^{zd}(F_r, G)$.
\ecor

\begin{proof}
By \cite{FLR}, $\X(F_r, G)^{zd}\subset \X^{sm}(F_r, G)$.  Since $Out_T(G)$ is a finite group acting freely on $\X^{zd}(F_r, G)$, we conclude that $\X^{zd}(F_r, G)\to \X^{zd}(F_r,G)/Out_T(G)$ is \'etale.  Since $\varphi^{zd}$ is a finite, bijective birational map, restricting it to a subspace of the smooth locus gives a local analytic isomorphism.  Hence $\X^{zd}(F_r, G)/Out_T(G)\to \Ch^{zd}(F_r, G)$ is also \'etale.  Since the composition of \'etale morphisms is \'etale, the result follows.
\end{proof}


We next analyze the groups $Out_T(G)$ for quasi-simple $G$ and show that they are often trivial.

\section{Trace preserving outer automorphisms}\label{outt}

$Out(G)$ is a subgroup of the symmetries of the Dynkin diagram of the Lie algebra of $G$ (see \cite[Appendix D]{FH}) and, hence, $Out(G)$ is finite for $G$ semisimple.  By definition, $Out_T(G)\subset Out(G)$ and, thus, $Out_T(G)$ is trivial whenever $Out(G)$ is trivial.

\begin{example}
$Out_T(G)$ is trivial for every matrix realization of odd orthogonal groups $\SO{2m+1}$, symplectic groups $\Sp{2m}$, and the complex groups $G_2, F_4, E_7,$ and $E_8$.
\end{example}

\begin{proof}
This follows from the fact that $B_n, C_n, G_2, F_4, E_7,$ and  $E_8$ have no symmetries of their Dynkin diagrams, see for example \cite{Samelson}.
\end{proof}

For $m\geq 3$, $Out(\SL{m})=\la\sigma\ |\ \sigma^2\ra$, where $\sigma$ is the Cartan involution $\sigma(A)=(A^{-1})^T$. Thus, the group $Out_T(\SL{m})$ is either trivial or equal to $Out(\SL{m})$ for $m>2$ depending on whether $\sigma$ is conjugation by an element of the normalizer of $\SL{m}$ in the ambient group $\SL{n}.$

For the canonical matrix realization of $\SL{m}$ it is easy to see that $\sigma$ is not trace-preserving for any $m>2.$  On the other hand, consider the adjoint representation of $\SL{m}$ which acts irreducibly on 
$$M_0:=\{M\in Mat(m,\C)\ |\ \tr( M)=0\}$$ 
by conjugation. It is a representation of $\mathrm{SL}(m,\C)$ into $\GL{m^2-1}$, which we denote by $\mathrm{Ad}$.

\begin{example}
For the standard matrix realization of $\SL{m}$, the group $Out_T(\SL{m})$ is trivial.  For $m\geq 3$, $$Out_T(\mathrm{Ad}(\SL{m}))\cong \Z/2\Z.$$
\end{example}

\bpr
Since $Out(\SL{m})$ is trivial for $m=2$ and $\tr(\sigma(A))\not=\tr(A)$ in general for $A\in \SL{m}$ for $m\geq 3,$ the first part follows.

For the second part, it is enough to prove that there is $P\in \mathrm{GL}(M_0)$ such that $\sigma$ coincides with conjugation by $P$ via $\mathrm{Ad},$ that is,
$$P \mathrm{Ad}(\sigma(A))P^{-1}=\mathrm{Ad}(A)$$ for every $A\in M_0.$
That means $$P(\sigma(A) (P^{-1}X) \sigma(A)^{-1})=A X A^{-1}$$ for every $X\in M_0.$
Note now that $P(M)=M^T$, obviously an invertible linear map, satisfies the above equation.
Thus, $Out_T(\mathrm{Ad}(\SL{m}))=\la \sigma \ |\ \sigma^2\ra,$ as required.
\epr

\begin{example}
For the standard matrix realization of $\SO{2m}$, the group $Out_T(\SO{2m})$ is isomorphic to $\Z/2\Z$.
\end{example}

\begin{proof}
Let $\sigma$ be an automorphism on $\SO{2m}$ given by conjugation by an orthogonal matrix of determinant $-1$ as in \cite{Si7}. Then $\sigma$ considered as an element of $Out(\SO{2m})$ does not depend on the choice of such matrix, \cite{Si7}.  Furthermore,
$\sigma^2=\mathrm{Id}$.  It is the only non-trivial outer automorphism of $\SO{2m}$ 
seen by considering the Dynkin diagram (for $m\not=4$).  Note that for $m=4$ the Dynkin diagram has order three symmetry (triality) and so $Out(\mathrm{Spin}(8,\C))$ is the symmetric group on 3 letters, but descending to $\SO{8}$ reduces the outer automorphisms back down to $\Z/2\Z$ again.  So assuming the defining matrix realization of $\SO{2m}$, $\sigma$ preserves the trace and, hence, $Out_T(\SO{2m})=\la \sigma\ |\ \sigma^2\ra$.
\end{proof}

We note that as shown in \cite{Samelson}, for the complex algebraic form of $E_6$, $Out(E_6)$ is generated by an order 2 element as well.

\begin{example}There exists a matrix realization of $E_6$ so that $Out_T(E_6)\cong \Z/2\Z$.
\end{example}

\begin{proof}
The minimal dimensional irreducible representation of $E_6$ gives an embedding of $E_6$ into $\GL{27}$.  Then the generator of $Out(E_6)$ is, as above, the Cartan involution $\sigma(A)=(A^{-1})^T$ \cite[page 68]{IY}.  As above, we can then consider the adjoint representation into $\GL{78}$ which will allow the Cartan involution to be conjugation by an element of the normalizer of $E_6$.
\end{proof}


\newcommand{\etalchar}[1]{$^{#1}$}
\def\cdprime{$''$} \def\cdprime{$''$} \def\cprime{$'$} \def\cprime{$'$}
  \def\cprime{$'$} \def\cprime{$'$}

\end{document}